\documentclass[letterpaper, 10 pt, conference]{ieeeconf}

\IEEEoverridecommandlockouts
\usepackage{amsmath, amssymb, amsfonts}
\usepackage{mathtools}
\usepackage{color}
\usepackage{graphicx}

\DeclareMathOperator{\median}{med}
\DeclareMathOperator*{\esssup}{ess\,sup}
\DeclareMathOperator*{\esslimsup}{ess\,lim\,sup}

\newcommand{\mmax}{_\mathrm{max}}

\newcommand{\ac}{_\mathrm{ac}}
\newcommand{\dc}{_\mathrm{dc}}
\newcommand{\const}{\mathrm{const}}
\newcommand{\trans}{^\mathsf{T}}
\newcommand{\Linf}{L^\infty}

\newcommand{\Gux}{\Gamma^{u \mapsto x}}
\newcommand{\Gfx}{\Gamma^{f \mapsto x}}
\newcommand{\Guy}{\Gamma^{u \mapsto y}}
\newcommand{\Gfy}{\Gamma^{f \mapsto y}}

\newcommand{\gux}{\gamma^{u \mapsto x}}
\newcommand{\gfx}{\gamma^{f \mapsto x}}
\newcommand{\guy}{\gamma^{u \mapsto y}}
\newcommand{\gfy}{\gamma^{f \mapsto y}}

\newcommand{\dif}{\mathrm{d}}
\newcommand{\ee}{\mathrm{e}}

\newcommand{\pb}[1]{\big( #1 \big)}

\newcommand{\px}[1]{\left( #1 \right)}

\newcommand{\abs}[1]{\vert #1 \vert}

\newcommand{\set}[1]{\lbrace #1 \rbrace}

\newcommand{\norm}[1]{\Vert #1 \Vert}
\newcommand{\normb}[1]{\big\Vert #1 \big\Vert}
\newcommand{\normB}[1]{\Big\Vert #1 \Big\Vert}
\newcommand{\normx}[1]{\left\Vert #1 \right\Vert}

\newcommand{\mean}[1]{\langle #1 \rangle}
\newcommand{\meanb}[1]{\big\langle #1 \big\rangle}

\newcommand{\bracksx}[1]{\left\lbrack #1 \right\rbrack}

\newcommand{\pd}[2][]{\frac{\partial #1}{\partial #2}}

\newcommand{\bbC}{\mathbb{C}}
\newcommand{\bbR}{\mathbb{R}}
\newcommand{\bbN}{\mathbb{N}}

\newcommand{\calU}{\mathcal{U}}

\newcommand{\calX}{\mathcal{X}}

\newcommand{\calS}{\mathcal{S}}

\newcommand{\calE}{\mathcal{E}}

\newtheorem{remark}{Remark}
\newtheorem{definition}{Definition}
\newtheorem{assumption}{Assumption}
\newtheorem{proposition}{Proposition}
\newtheorem{lemma}{Lemma}
\newtheorem{theorem}{Theorem}
\newtheorem{corollary}{Corollary}

\title{\LARGE \bf Period-Aware Asymptotic Gain\\with Application to a Periodically Forced Synchronization Circuit}

\author{Anton Ponomarev$^{1}$ and Lutz Gröll$^{1}$ and Veit Hagenmeyer$^{1}$
\thanks{*The authors gratefully acknowledge funding by the German Federal Ministry of Research, Technology and Space (BMFTR) within the Kopernikus Project ENSURE ‘New ENergy grid StructURes for the German Energiewende’ (03SFK1B0-3).}
\thanks{$^{1}$The authors are with the Institute for Automation and Applied Informatics, Karlsruhe Institute of Technology, Eggenstein-Leopoldshafen, 76344 Baden-Württemberg, Germany {\tt\small \{anton.ponomarev, lutz.groell, veit.hagenmeyer\}@kit.edu}}%
}

\begin{document}

\maketitle
\thispagestyle{empty}
\pagestyle{empty}

\begin{abstract}
The classical asymptotic gain (AG) is a concept known from the input-to-state stability theory. Given a uniform input bound, AG estimates the asymptotic bound of the output. Sometimes, however, more information is known about the input than just a bound. In this paper we consider the case of a periodic input. Under the assumption that the system converges to a periodic solution, we introduce a new gain, called period-aware asymptotic gain (PAG), which employs periodicity to enable a sharper asymptotic estimation of the output. Since the PAG can distinguish between short-period (``high-frequency'') and long-period (``low-frequency'') signals, it is able to rigorously quantify such properties as bandwidth, resonant behavior, and high-frequency damping. We discuss how the PAG can be computed and illustrate it with a numerical example from the field of power electronics.
\end{abstract}

\section{Introduction}

\emph{Asymptotic gain} (AG) is a convenient characteristic of nonlinear control systems. It relates the uniform input bound and the asymptotic bound of the output. AG is a part of the \emph{input-to-state stability} (ISS) theory~\cite{sontagInputtoStateStabilityProperty1995}.

One application of the AG is in the \emph{small-gain} stability analysis of dynamical networks~\cite{dashkovskiyISSSmallGain2007}. Based on the way each subsystem damps or amplifies the input, conclusions can be made about whether the signals eventually vanish while looping through the network. On a very high level, the small-gain idea can be compared to the \emph{passivity} and \emph{dissipativity} conditions which operate with flows of energy rather than signal bounds~\cite{vidyasagarNewPassivitytypeCriteria1979}. 

If the input is affected by a bounded persistent perturbation, both AG and energy-based approaches can bound the output. An important distinction is that AG deals with uniform (supremum) bounds whereas energy is a more delicate measure of an ``effective'' magnitude. In practice, input perturbation often \emph{oscillates} inside the supremum bounds which makes the AG approach very conservative. In the present contribution we tweak the AG concept to try and capture some aspects of averaging and oscillatory behavior, at the same time preserving the magnitude bounds that are characteristic of ISS and not of the energy-based tools.

Consider a linear system and its Bode diagram, particularly its magnitude part. At face value, it shows the gain of pure sine signals as they go through the system. To carry this notion over to nonlinear systems, interaction between different frequencies has to be taken into account. This gives rise to such methods as \emph{harmonic balance}, \emph{Volterra series}, \emph{nonlinear frequency response}~\cite{pavlovFrequencyResponseFunctions2007}, etc. One may also read from the Bode diagram that the system blocks inputs in a certain frequency band, or that there is resonance at a certain frequency. Our variation of AG indicates similar properties of a nonlinear system forced by a \emph{periodic input}. Note the difference: whereas methods like harmonic balance assume \emph{harmonic} inputs, we consider \emph{arbitrary} periodic signals.

The proposed AG is called a \emph{period-aware asymptotic gain} (PAG) and applies to the case of periodic inputs. Such inputs or disturbances do appear in some applications~-- e.g., in Section~\ref{se: example} we consider an example from power electronics where disturbance is often dominated by the harmonics of the main grid frequency. The idea is to treat the input as the sum of a constant signal (\emph{DC component}) and a periodic signal with zero average (\emph{AC component}). Knowing the magnitude bounds of each component, we can estimate the asymptotic bounds of the DC and AC components of the output~-- this essentially defines the PAG.

PAG must be compared to the recently introduced ISS variations that bound the output magnitude by the moving average of the input~\cite{efimovISSRespectAverage2024,haimovichInputpowertostateStabilityTimevarying2025a}. PAG is different in two ways:
\begin{itemize}
    \item PAG depends on the \emph{magnitudes} of the AC/DC components of the input, not on the moving average which would be an energy-like approach;
    \item PAG is a \emph{vectorial} function~-- it maps two arguments (the bounds on the AC/DC components of the input) to two bounds on the AC/DC components of the output.
\end{itemize}
However, PAG does not replace ISS: it is invoked \emph{after} convergence to a periodic solution is established~-- for instance, by means of ISS, see Section~\ref{se: scenario}.

If, looking at the PAG plot, one sees that the gain of short-period inputs is far below 1, then it can be interpreted similarly to the high-frequency attenuation property of linear systems. It is readily assumed and widely used in the engineering practice that such frequency-dependent properties transfer from linearization to nonlinear systems under the small-signal approximation~\cite{wangHarmonicStabilityPower2019}. The PAG analysis may help quantify this linearization-based point of view.

In Section~\ref{se: main} we define the PAG and outline the framework for its use. The formula for PAG in the linear case is given in Section~\ref{se: linear}. For nonlinear systems, we derive a linearization-based approximation in Section~\ref{se: nonlinear}. The estimations are tried out in Section~\ref{se: example} in a nonlinear example. The results show that the PAG bounds are tighter than the classical period-agnostic AG.

\section{Preliminaries}
\label{se: preliminaries}

Let us introduce the notation and some basic notions. Given $x, y\in\bbR^n$ and $M \in \bbR^{m\times n}$:
\begin{itemize}
    \item $x\trans$ is the transpose of $x$;
    \item $x \cdot y = x\trans y$;
    \item $\norm{x} = \sqrt{x\cdot x}$;
    \item $\norm{M} = \max_{\norm{x} = 1} \norm{Mx}$;
    \item $\norm{x}_1 = \sum_{k=1}^n \abs{x_k}$;
    \item $x \preceq y \iff x_k \leq y_k$ for all $k = 1, 2, \dots, n$.
\end{itemize}
Given $f \in \Linf(\bbR, \bbR^n)$:
\begin{itemize}
    \item $\norm{f}_\infty = \esssup_{t \in \bbR} \norm{f(t)}$;
    \item given $T>0$, the \emph{moving $T$-average} of $f$ is
    \begin{equation}
        \mean{f}_T (t) := \frac1T \int_{t-T}^t f(\tau) \,\dif\tau;
    \end{equation}
    \item we shall use the $\bbR^2$-vector
    \begin{equation}
        \rho_T(f) = \begin{bmatrix*}[l]
            \esslimsup_{t\to\infty} \normb{\mean{f}_T(t)} \\
            \esslimsup_{t\to\infty} \normb{f(t) - \mean{f}_T(t)}
        \end{bmatrix*}
    \end{equation}
    as a measure of the \emph{asymptotic AC/DC magnitude} of $f$ with respect to the window length $T>0$;
    \item if $f$ is $T$-periodic then its \emph{AC/DC decomposition} is
    \begin{equation}
        f(t) = f\dc + f\ac(t)
    \end{equation}
    where $f\dc = \mean{f}_T = \const$ and $f\ac$ is $T$-periodic with $\mean{f\ac}_T = 0$;
    \item if $f$ is $T$-periodic then
    \begin{equation}
        \rho_T(f) = \begin{bmatrix*}[l]
            \norm{f\dc} \\
            \norm{f\ac}_\infty
        \end{bmatrix*}.
    \end{equation}
\end{itemize}
Given $f \in \Linf([0,T], \bbR^n)$:
\begin{itemize}
    \item the \emph{geometric median} of $f$ is
    \begin{equation}
        \label{eq: median definition explicit}
        \median f := \arg\min_{\mu \in \bbR^n} \frac1T \int_0^T \norm{f(t) - \mu} \,\dif t;
    \end{equation}
    \item an equivalent \emph{implicit} definition of $\median f$ is \cite[Eq.~(5)]{cardotEfficientFastEstimation2013}
    \begin{equation}
        \label{eq: median definition implicit}
        \frac1T \int_0^T \frac{f(t) - \median f}{\norm{f(t) - \median f}} \,\dif t = 0
    \end{equation}
    where the 0/0 irregularity is resolved to 0;
    \item the \emph{mean absolute deviation} of $f$ from its median is
    \begin{equation}
        D_{\median} f :=  \frac1T \int_0^T \norm{f(t) - \median f} \,\dif t,
    \end{equation}
    i.e., the result of the minimization in~\eqref{eq: median definition explicit}.
\end{itemize}

\section{Period-Aware Asymptotic Gain}
\label{se: main}

\subsection{Definition}

Consider the system
\begin{subequations}
    \label{eq: system general}
    \begin{align}
        \dot x &= Ax + Bu + Ff(x,u), \\
        y &= Cx + g(x)
    \end{align}
    where $x \in \bbR^n$, $u \in \bbR^m$, $y \in \bbR^p$, matrix $A$ is Hurwitz, and functions $f$ and $g$ are globally smooth.
\end{subequations}

It is often necessary to limit the admissible initial states and inputs. To this end, we introduce:
\begin{itemize}
    \item $\calX_0 \subset \bbR^n$~-- an open set of initial states with $0 \in \calX_0$;
    \item $\calU = \set{u \in \Linf(\bbR, \bbR^m) \colon \norm{u}_\infty < u\mmax}$~-- a set of inputs bounded by some $u\mmax \in (0, \infty]$.
\end{itemize}

Recall the following classical definition.

\begin{definition}
    \label{def: gain}
    A non-decreasing $\gamma\colon [0, u\mmax) \to [0, \infty)$ is called a (conservative) \emph{asymptotic gain} (AG) of~\eqref{eq: system general} if for every $u \in \calU$ and $x(0) \in \calX_0$
    \begin{equation}
        \limsup_{t\to\infty} \norm{y(t)} \leq \gamma(\norm{u}_\infty).
    \end{equation}
    The minorant of conservative AGs is called the \emph{exact} AG.
\end{definition}

Given an input magnitude bound, AG provides an asymptotic bound of the output magnitude. If, in addition to the magnitude bound, \emph{periodicity} of the input is assumed, then it is natural to expect that the output bound can be tightened. This tighter bound is provided by the \emph{period-aware} AG defined as follows.

In the notation of Section~\ref{se: preliminaries}, let
\begin{multline}
    \calU_T = \set{u \in \Linf(\bbR, \bbR^m) \colon u \text{\ is\ $T$-periodic,} \\
    \norm{\rho_T(u)}_1 < u\mmax}.
\end{multline}
Note that $\calU_T \subset \calU$ because for a $T$-periodic $u$
\begin{equation}
    \label{eq: 1-norm inf-norm inequality u}
    \norm{u}_\infty \leq \norm{u\dc} + \norm{u\ac}_\infty \equiv \norm{\rho_T(u)}_1.
\end{equation}

\begin{definition}
    \label{def: pag}
    With the notation of Section~\ref{se: preliminaries}, a $\preceq$-non-decreasing function $\gamma_T\colon [0, u\mmax)^2 \to [0, \infty)^2$ is called a (conservative) \emph{period-aware asymptotic gain} (PAG) of~\eqref{eq: system general} if for every $u \in \calU_T$ and $x(0) \in \calX_0$
    \begin{equation}
        \rho_T(y) \preceq \gamma_T\pb{\rho_T(u)}.
    \end{equation}
    The $\preceq$-minorant of conservative PAGs is the \emph{exact} PAG.
\end{definition}

The following claim is obvious since $\calU_T \subseteq \calU_{kT}$ for $k \in \bbN$.

\begin{proposition}
    \label{pr: integer periods}
    For all $k \in \bbN$ and $T > 0$, PAG $\gamma_T$ satisfies $\gamma_T\pb{\rho_T(\cdot)} \preceq \gamma_{kT}\pb{\rho_{kT}(\cdot)}$.
\end{proposition}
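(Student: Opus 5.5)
The plan is to reduce the claim to two facts. First, for a $T$-periodic signal the AC/DC measure does not change when the window is enlarged to $kT$. Second, every conservative $kT$-PAG is automatically a conservative $T$-PAG. The inequality then follows from the definition of the exact PAG as a $\preceq$-minorant. I read $\gamma_T$ and $\gamma_{kT}$ as the exact PAGs and the argument "$\cdot$" as ranging over $u\in\calU_T$.

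\emph{Step 1: window invariance.} Let $u$ be $T$-periodic. Then $\mean{u}_{kT}(t)=\frac1k\sum_{j=0}^{k-1}\mean{u}_T(t-jT)=u\dc$, and therefore $u-\mean{u}_{kT}=u\ac$. Hence $\rho_{kT}(u)=\rho_T(u)$. Since $\norm{\rho_{kT}(u)}_1=\norm{\rho_T(u)}_1<u\mmax$, this gives $\calU_T\subseteq\calU_{kT}$. For the output I need the analogous identity $\rho_T(y)=\rho_{kT}(y)$. This holds whenever $y$ converges to a $T$-periodic signal $\bar y$, which is the standing framework in which the PAG is invoked (convergence to a periodic solution, established e.g.\ by ISS). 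Indeed, $\norm{y-\bar y}\to0$ implies that the moving averages of $y-\bar y$ over any fixed window also tend to zero. The $\esslimsup$ in $\rho$ only sees the tail, so $\rho_T(y)=\rho_T(\bar y)$ and $\rho_{kT}(y)=\rho_{kT}(\bar y)$. Applying the periodic computation above to $\bar y$ gives $\rho_T(\bar y)=\rho_{kT}(\bar y)$.

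\emph{Step 2: conservative $kT$-gains are conservative $T$-gains.} Let $\tilde\gamma$ be any conservative PAG for period $kT$, let $u\in\calU_T\subseteq\calU_{kT}$, and let $x(0)\in\calX_0$. Then
\begin{equation*}
\rho_T(y)=\rho_{kT}(y)\preceq\tilde\gamma\pb{\rho_{kT}(u)}=\tilde\gamma\pb{\rho_T(u)}.
\end{equation*}
The domain $[0,u\mmax)^2$ and the monotonicity requirement are the same for both periods, so $\tilde\gamma$ is a conservative PAG for period $T$.

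\emph{Step 3: pass to the minorant.} The exact $\gamma_{kT}$ is the pointwise infimum of conservative $kT$-gains. It is itself conservative, because the defining inequality survives taking infima componentwise and monotonicity is preserved. By Step 2 it is therefore a conservative $T$-gain, so $\gamma_T\preceq\gamma_{kT}$ pointwise. Evaluating at $\rho_T(u)=\rho_{kT}(u)$ yields the claim.

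The only genuinely delicate point is the output identity $\rho_T(y)=\rho_{kT}(y)$ in Step 1. Without asymptotic $T$-periodicity of $y$ it can fail, for example under subharmonic responses. I would therefore make explicit that the statement relies on the paper's standing assumption that solutions converge to a $T$-periodic steady state.
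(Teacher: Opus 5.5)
Your proposal is correct and follows the paper's route. The paper's entire justification is that $\calU_T\subseteq\calU_{kT}$, and your Steps 1--3 supply what that one-liner leaves implicit: $\rho_T=\rho_{kT}$ on $T$-periodic signals, and passage to the minorant. Your caveat is well taken: the output identity $\rho_T(y)=\rho_{kT}(y)$ genuinely requires convergence to a $T$-periodic steady state (it can fail for subharmonic responses), and the paper uses this tacitly; it is guaranteed in its scenario under Assumptions~\ref{as: LISS} and~\ref{as: contraction}.
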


Once $\gamma_T$ is found for $T \in [T_0, 2T_0]$ with some $T_0 > 0$, Proposition~\ref{pr: integer periods} can be used to lower-bound $\gamma_T$ for $T > 2T_0$.

\subsection{Typical Scenario}
\label{se: scenario}

We view the PAG concept as an \emph{add-on} to the classical \emph{input-to-state stability} (ISS) analysis rather than a proper refinement thereof. We have in mind a practical scenario of its use that shall be illustrated by an example in Section~\ref{se: example}. The scenario starts with validating the following two assumptions by any of the existing methods.

\begin{assumption}
    \label{as: LISS}
    Every solution $x(t)$ of~\eqref{eq: system general} with $u \in \calU$ and $x(0) \in \calX_0$ after some time enters a compact forward-invariant set $\calS(\norm{u}_\infty) \subset \bbR^n$.
\end{assumption}

Assumption~\ref{as: LISS} can be established, e.g., by showing that system~\eqref{eq: system general} is \emph{locally input-to-state stable} (LISS) \cite{sontagNewCharacterizationsInputtostate1996}. It implies the conservative AG
\begin{equation}
    \label{eq: scenario AG}
    \gamma(\norm{u}_\infty) = \max_{x \in \calS(\norm{u}_\infty)} \norm{Cx + g(x)}.
\end{equation}

\begin{assumption}
    \label{as: contraction}
    System~\eqref{eq: system general} with every $u \in \calU$ is \emph{uniformly exponentially contractive}~\cite{lohmillerContractionAnalysisNonlinear1998} in $\calS(\norm{u}_\infty)$, i.e., for every pair of its solutions $\bar x, \tilde x$ residing in $\calS(\norm{u}_\infty)$
    \begin{equation}
        \norm{\bar x(t) - \tilde x(t)} \leq c \ee^{-\sigma t} \norm{\bar x(0) - \tilde x(0)}
    \end{equation}
    for all $t \geq 0$ and some constants $c \geq 1$ and $\sigma > 0$ independent of $u$, $\bar x(0)$, and $\tilde x(0)$.
\end{assumption}

With Assumptions~\ref{as: LISS} and ~\ref{as: contraction} established, we proceed with the PAG analysis by assuming that $u \in \calU_T \subset \calU$. Then the stroboscopic map $x(0) \mapsto x(T)$ is a contraction that maps $\calS(\norm{\rho_T(u)}_1)$ into itself, and by Banach's theorem the map has a unique fixed point in $\calS(\norm{\rho_T(u)}_1)$. Thus, all solutions of~\eqref{eq: system general} with $u \in \calU_T$ and $x(0) \in \calX_0$ converge to a unique periodic solution $\hat x(t; u)$ contained in $\calS(\norm{\rho_T(u)}_1)$. Suppose that the AC/DC components of the output $\hat y$ produced on $\hat x$ can be estimated as
\begin{equation}
    \label{eq: y hat estimation scenario}
    \rho_T(\hat y) =
    \begin{bmatrix*}[l]
        \norm{\hat y\dc} \\
        \norm{\hat y\ac}_\infty
    \end{bmatrix*} \preceq \gamma_T\pb{\rho_T(u)}
\end{equation}
with some function $\gamma_T$. Then $\gamma_T$ is a PAG.

\subsection{Is PAG sharper than AG?}
\label{se: sharpness}

With the AG $\gamma$ from~\eqref{eq: scenario AG} using~\eqref{eq: 1-norm inf-norm inequality u} we could have immediately had for $u \in \calU_T$
\begin{equation}
    \norm{\hat y}_\infty \leq \gamma(\norm{u}_\infty) \leq \gamma\pb{\norm{\rho_T(u)}_1}
\end{equation}
due to the monotonicity of $\gamma$. On the other hand, with the PAG $\gamma_T$ we have from~\eqref{eq: y hat estimation scenario}
\begin{equation}
    \label{eq: 1-norm inf-norm inequality y}
    \norm{\hat y}_\infty \leq \norm{\hat y\dc} + \norm{\hat y\ac}_\infty
    \leq \normb{\gamma_T\pb{\rho_T(u)}}_1.
\end{equation}
One might say that PAG $\gamma_T$ is \emph{sharper} than AG~$\gamma$ if $\gamma_T$ is dominated by $\gamma$ in the sense of
\begin{equation}
    \label{eq: scenario PAG better than AG}
    \normb{\gamma_T\pb{\rho_T(u)}}_1 <
    \gamma\pb{\norm{\rho_T(u)}_1}.
\end{equation}
It is possible that, on the contrary, $\gamma$ turns out sharper than $\gamma_T$, perhaps in large part due to the crudeness of~\eqref{eq: 1-norm inf-norm inequality y}. Nevertheless, even in that case the \emph{separate} estimations of the output's AC/DC components provided by $\gamma_T$ might still be useful, e.g., if one of the components is ``small\rlap{.}''

\section{Linear Case}
\label{se: linear}

In the linear case, PAG can be found using the basic linear system theory.

\subsection{Calculating the PAG}

Consider system~\eqref{eq: system general} with $f = 0$ and $g = 0$. Let $G(s)$ be its transfer matrix and $H(t)$ its impulse response matrix:
\begin{alignat}{3}
    G(s) &:= C (sI - A)^{-1} B, \quad && s\in\bbC, \\
    H(t) &:= C \ee^{At} B, && t\geq 0.
\end{alignat}
Recall that $y(t) = \int_0^\infty H(\tau) u(t-\tau) \,\dif\tau$ and thus the exact AG is the linear function
\begin{equation}
    \label{eq: exact AG linear}
    \gamma(\norm{u}_\infty) = \int_0^\infty \norm{H(t)} \,\dif t \: \norm{u}_\infty.
\end{equation}
Let $H_T(t)$ be the $T$-periodic-impulse response:
\begin{equation}
    H_T(t) := \sum_{k=0}^{\infty} H(t + kT)
    \equiv C \ee^{At} (I - \ee^{AT})^{-1} B.
\end{equation}

\begin{theorem}
    \label{th: linear}
    \begin{subequations}
        The exact PAG of a stable linear system with transfer matrix $G$ and $T$-periodic-impulse response $H_T$ is
        \begin{equation}
            \gamma_T\pb{\rho_T(u)} = \Gamma_T \rho_T(u)
        \end{equation}
        where
        \begin{align}
            \Gamma_T &= \begin{bmatrix}
                \gamma\dc & 0 \\
                0 & \gamma\ac(T)
            \end{bmatrix}, \\
            \label{eq: dc gain linear}
            \gamma\dc &= \norm{G(0)}, \\
            \label{eq: ac gain linear exact}
            \gamma\ac(T) &= T \sup_{\substack{v\in\bbR^p \\ \norm{v} = 1}} D_{\median} (v \cdot H_T).
        \end{align}
    \end{subequations}
\end{theorem}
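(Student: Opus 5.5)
Since $A$ is Hurwitz and $u\in\calU_T$ is $T$-periodic, every solution converges to the unique $T$-periodic solution
\[
\hat y(t)=\int_0^\infty H(\tau)u(t-\tau)\,\dif\tau .
\]
Hence $\rho_T(y)=\rho_T(\hat y)$, and the plan is to compute the AC and DC parts of $\hat y$ separately and to show that each bound is attained.

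\textbf{DC part.} Averaging over one period and exchanging the integrals gives
\[
\hat y\dc=\int_0^\infty H(\tau)\,\dif\tau\; u\dc=G(0)u\dc .
\]
Therefore $\norm{\hat y\dc}\le\norm{G(0)}\norm{u\dc}$, with equality when $u\dc$ is a maximizing singular direction of $G(0)$.

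\textbf{AC part.} Folding the integral over periods gives
\[
\hat y\ac(t)=\int_0^\infty H(\tau)u\ac(t-\tau)\,\dif\tau=\int_0^T H_T(\tau)\,u\ac(t-\tau)\,\dif\tau .
\]
The DC part of $u$ contributes nothing here, so the gain matrix is diagonal and $\gamma_T(\rho_T(u))=\Gamma_T\rho_T(u)$. To bound $\norm{\hat y\ac(t)}$, I write it as $\sup_{\norm v=1}v\cdot\hat y\ac(t)$. Let $h_v:=H_T\trans v$, the vector-valued function underlying $v\cdot H_T$ (as a row function). Since $\int_0^T u\ac=0$, any constant $\mu\in\bbR^m$ can be subtracted for free:
\[
v\cdot\hat y\ac(t)=\int_0^T\pb{h_v(\tau)-\mu}\cdot u\ac(t-\tau)\,\dif\tau\le\norm{u\ac}_\infty\int_0^T\norm{h_v(\tau)-\mu}\,\dif\tau .
\]
Minimizing over $\mu$ by \eqref{eq: median definition explicit} yields
\[
\norm{\hat y\ac}_\infty\le T\sup_{\norm v=1}D_{\median}(v\cdot H_T)\,\norm{u\ac}_\infty ,
\]
which is the upper bound with $\gamma\ac(T)$ as in \eqref{eq: ac gain linear exact}.

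\textbf{Sharpness of the AC bound.} This is the step I expect to be the main obstacle. Fix $v$ and set $\mu=\median h_v$. I would choose the input, for $\tau\in[0,T)$ and extended $T$-periodically, as
\[
u\ac(t_0-\tau)=a\,\frac{h_v(\tau)-\mu}{\norm{h_v(\tau)-\mu}} .
\]
The implicit median characterization \eqref{eq: median definition implicit} says exactly that this signal has zero mean, so it is an admissible AC component with $\norm{u\ac}_\infty=a$. With this choice the Cauchy--Schwarz step is tight at $t=t_0$, giving
\[
v\cdot\hat y\ac(t_0)=a\,T\,D_{\median}(v\cdot H_T).
\]
Taking the supremum over $v$ then shows the bound is attained (or approached, if the supremum over $v$ is not achieved).

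Two technicalities remain. First, the 0/0 convention: on the set where $h_v=\mu$ the input may be chosen arbitrarily with norm at most $a$ without changing the result. Second, I have to confirm that the AC and DC extremal inputs can be realized simultaneously. This holds because the AC input is defined independently of $u\dc$, and adding $u\dc$ shifts $\hat y$ only by the constant $G(0)u\dc$, so $\rho_T$ equals the diagonal bound exactly. Thus $\Gamma_T$ is the $\preceq$-minorant, i.e.\ the exact PAG.
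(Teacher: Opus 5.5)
Your proposal is correct and follows the paper's proof essentially step for step. You use $\hat y\dc = G(0)u\dc$, fold the AC part into the periodic convolution with $H_T$, subtract a free constant $\mu$ using $\mean{u\ac}_T = 0$, and attain the bound with the normalized bang-bang input, whose zero mean is ensured by taking $\mu$ to be the geometric median via \eqref{eq: median definition implicit}. Your separate upper-bound step (minimizing over $\mu$ via \eqref{eq: median definition explicit}) and your explicit check that the DC and AC extremal inputs can be realized simultaneously simply spell out what the paper's more compressed argument leaves implicit.
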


\begin{proof}
    Since $A$ is Hurwitz, the system's response to a $T$-periodic input $u(t) = u\dc + u\ac(t)$ globally converges to the $T$-periodic output $\hat y(t) = \hat y\dc + \hat y\ac(t)$ with
    \begin{subequations}
        \begin{align}
            \label{eq: dc linear transform}
            \hat y\dc &= G(0) u\dc,\\
            \label{eq: ac linear transform}
            \hat y\ac(t) &= \int_0^\infty H(\tau) u\ac(t-\tau) \,\dif\tau.
        \end{align}
    \end{subequations}
    From~\eqref{eq: dc linear transform} follows~\eqref{eq: dc gain linear}.
    
    Since $u\ac$ is $T$-periodic, \eqref{eq: ac linear transform} can be rearranged into the \emph{periodic convolution}
    \begin{equation}
        \label{eq: ac linear transform periodic}
        \hat y\ac(t) \equiv \int_0^T H_T(\tau) u\ac(t-\tau) \,\dif\tau.
    \end{equation}
    Thus,
    \begin{equation}
        \label{eq: output norm as supremum}
        \norm{\hat y\ac(t)} = \sup_{\substack{v\in\bbR^p \\ \norm{v} = 1}}
        \normx{\int_0^T v \cdot H_T(\tau) u\ac(t-\tau) \,\dif\tau}.
    \end{equation}
    If $\mean{u\ac}_T = 0$ then for every constant $\mu \in \bbR^p$
    \begin{equation}
        \norm{\hat y\ac(t)} \equiv \sup_{\substack{v\in\bbR^p \\ \norm{v} = 1}}
        \normx{\int_0^T \pb{v \cdot H_T(\tau) - \mu\trans} u\ac(t-\tau) \,\dif\tau}.
    \end{equation}
    The exact maximum of
    \begin{equation}
        \label{eq: ac integral to maximize}
        \normx{\int_0^T \pb{v \cdot H_T(\tau) - \mu\trans} u\ac(t-\tau) \,\dif\tau}
    \end{equation}
    with respect to $u\ac(\cdot)$ bounded by $\norm{u\ac}_\infty$ is delivered by
    \begin{equation}
        \label{eq: worst case input linear}
        u\ac(t-\tau) = \frac{v \cdot H_T(\tau) - \mu\trans}
        {\norm{v \cdot H_T(\tau) - \mu\trans}} \norm{u\ac}_\infty.
    \end{equation}
    In order to satisfy the requirement $\mean{u\ac}_T = 0$, it is necessary to take $\mu\trans = \median(v \cdot H_T)$ according to~\eqref{eq: median definition implicit}. With this $\mu$, the value of~\eqref{eq: ac integral to maximize} using~\eqref{eq: worst case input linear} is $T D_{\median}(v \cdot H_T) \norm{u\ac}_\infty$ which leads to~\eqref{eq: ac gain linear exact}.
\end{proof}

\begin{remark}
    A valid conservative value of $\gamma\ac$ is
    \begin{equation}
        \label{eq: ac gain linear simple}
        \gamma\ac(T) = \int_0^T \norm{H_T(t)} \,\dif t
    \end{equation}
    which is obtained by maximizing the norm of~\eqref{eq: ac linear transform periodic} ignoring the requirement $\mean{u\ac}_T = 0$. Incidentally, conservative AC-PAG~\eqref{eq: ac gain linear simple} is the same as the bound \cite[Eq.~(5.27)]{karafyllisRelationIOSgainsAsymptotic2021}.
\end{remark}

\begin{remark}
    The geometric median involved in~\eqref{eq: ac gain linear exact} is meant to be found numerically as the geometric median of finely sampled time series. Efficient numerical optimization methods exist for this purpose, e.g., see \cite{cohenGeometricMedianNearly2016}. For a SISO system, $\median H_T$ is the standard median (a value $\mu$ such that $H_T$ is half the time above $\mu$ and half the time below).
\end{remark}

\begin{figure}
    \centering
    \includegraphics[trim = 6.8cm 11cm 6.8cm 11.4cm, clip]{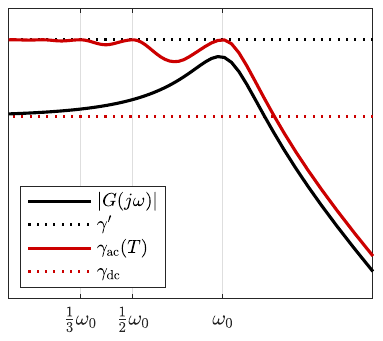}
    \caption{Linear input-output gains: frequency response $\abs{G(j\omega)}$, derivative $\gamma'$ of the classical asymptotic gain $\gamma$, and period-aware asymptotic gain $\gamma_T$ represented by its AC/DC components $\gamma\ac(T)$ and $\gamma\dc$ (Theorem~\ref{th: linear}). The horizontal axis is both the $\omega$-axis for $\abs{G(j\omega)}$ and $T$-axis for $\gamma\ac(T)$ with $T = 2\pi/\omega$. The system has a resonant frequency $\omega_0$.}
    \label{fig: linear}
\end{figure}

\subsection{Comparison of Three Gains}

For linear systems, we have the following asymptotic input-output gains:
\begin{enumerate}
    \item $\norm{G(j\omega)}$~-- relevant if the input is a pure sine wave with angular frequency $\omega$;
    \item PAG $\gamma_T$~-- relevant for bounded $T$-periodic inputs;
    \item AG $\gamma$~-- relevant for all uniformly bounded inputs.
\end{enumerate}
The set of $T$-periodic signals is a superset of sine waves with the corresponding \emph{fundamental} angular frequency $\omega = 2\pi/T$. Under this correspondence, each gain on the list above provides an output bound for a more general class of inputs than the previous one. The following relations therefore hold.

\begin{proposition}
    \label{pr: linear gains comparison}
    If $G$ is the transfer matrix of a stable linear system, $\gamma\ac$ and $\gamma\dc$ are the AC/DC components of the exact PAG from Theorem~\ref{th: linear}, and $\gamma'$ is the derivative of the exact AG $\gamma$ from~\eqref{eq: exact AG linear}, then for all $T > 0$
    \vspace{-1mm}
    \begin{subequations}
        \begin{alignat}{3}
            \norm{G(0)} &= \hspace{2.5mm} \gamma\dc &&\leq \gamma', \\
            \normb{G(j2\pi/T)} &\leq \gamma\ac(T) &&\leq \gamma'.
        \end{alignat}
    \end{subequations}
    \vspace{-5mm}
\end{proposition}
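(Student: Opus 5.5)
The plan is to prove the four inequalities by the nested-class argument sketched just before the proposition, each time exhibiting a suitable input, and to use the explicit formulas only where they are needed. Here $\gamma' = \int_0^\infty \norm{H(t)}\,\dif t$ is a constant by~\eqref{eq: exact AG linear}, and $\gamma\dc = \norm{G(0)}$ holds by Theorem~\ref{th: linear}.

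For the DC inequality, note that $G(0) = \int_0^\infty H(t)\,\dif t$. Hence
\[
\norm{G(0)} \le \int_0^\infty \norm{H(t)}\,\dif t = \gamma'
\]
by the triangle inequality for Bochner integrals.

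For the upper AC bound $\gamma\ac(T) \le \gamma'$, there are two routes.
\begin{itemize}
    \item \emph{Operator argument.} Any $T$-periodic $u\ac$ with zero mean lies in $\calU$. Its periodic response $\hat y\ac$ from~\eqref{eq: ac linear transform} satisfies $\norm{\hat y\ac}_\infty \le \gamma' \norm{u\ac}_\infty$. Since the exact PAG is attained by the worst-case input~\eqref{eq: worst case input linear}, taking the supremum gives the claim.
    \item \emph{Formula argument.} Using Remark~1, $\gamma\ac(T) \le \int_0^T \norm{H_T(t)}\,\dif t$. Then
    \[
    \norm{H_T(t)} \le \sum_{k\ge0} \norm{H(t+kT)},
    \]
    and integrating over $[0,T]$ tiles $[0,\infty)$, which yields $\gamma'$.
\end{itemize}
One must check that $D_{\median}$ is bounded by the mean absolute deviation from $\mu = 0$. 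This is immediate from the minimization in~\eqref{eq: median definition explicit}, together with $\norm{v\trans H_T} \le \norm{H_T}$ for $\norm{v} = 1$. I would present the formula route, since it is fully self-contained.

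For the lower bound $\norm{G(j\omega)} \le \gamma\ac(T)$ with $\omega = 2\pi/T$, the argument works with real sinusoidal inputs.
\begin{enumerate}
    \item Take the top right singular vector $w \in \bbC^m$, $\norm{w} = 1$, of $G(j\omega)$. The complex input $w\ee^{j\omega t}$ produces the output $G(j\omega) w \ee^{j\omega t}$, whose norm is $\norm{G(j\omega)}$.
    \item Real inputs are needed, so consider $u_\theta(t) = \mathrm{Re}(\ee^{j\theta} w \ee^{j\omega t})$. This is $T$-periodic with zero mean, so $u_\theta = u\ac$.
    \item The corresponding steady-state output is $\hat y(t) = \mathrm{Re}(\ee^{j\theta} G(j\omega) w \ee^{j\omega t})$. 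Its sup norm over $t$ equals $\max_\phi \norm{\mathrm{Re}(\ee^{j\phi} z)}$ with $z = G(j\omega)w$, and this does not depend on $\theta$.
    \item The difficulty is comparing $\max_\phi \norm{\mathrm{Re}(\ee^{j\phi} z)}$ with $\norm{u_\theta}_\infty = \max_\phi \norm{\mathrm{Re}(\ee^{j\phi} w)}$.
\end{enumerate}

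This comparison is the main obstacle. For SISO systems it is trivial, and both sides equal $\abs{G(j\omega)}$. For MIMO systems a naive choice of $w$ may fail, because the real part of $w$ can have a larger sup norm than the real part of $z$ relative to their complex norms.

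I would first try to avoid this via the real-vector definition. Choose real unit vectors $v \in \bbR^p$ and $a \in \bbR^m$ maximizing $\abs{v\trans G(j\omega) a}$, and use the scalar input $u = a\cos(\omega t + \theta)$, for which $\norm{u}_\infty = 1$. This yields
\[
\gamma\ac(T) \ge \max_{v,a} \abs{v\trans G(j\omega) a}.
\]
If $\norm{G(j\omega)}$ denotes the spectral norm of the complex matrix, this real bilinear maximum can be smaller, up to a factor $1/\sqrt2$ in the worst case. In that case I would either interpret $\norm{G(j\omega)}$ as the induced norm on real sinusoids, $\sup\norm{\hat y}_\infty/\norm{u}_\infty$ over real sinusoidal $u$, which makes the inequality immediate from the class inclusion, or else restrict the proof to SISO channels. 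This gap is where I expect the argument to need the most care.
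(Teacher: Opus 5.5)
Your proofs of $\norm{G(0)}=\gamma\dc\le\gamma'$ and $\gamma\ac(T)\le\gamma'$ are correct. The paper gives no separate proof. It only invokes, just before the proposition, the class inclusion: sine waves of frequency $2\pi/T$ $\subset$ $T$-periodic signals $\subset$ bounded signals. That is essentially your ``operator argument.'' Your formula route is a more explicit and self-contained alternative: the triangle inequality on $G(0)=\int_0^\infty H(t)\,\dif t$, then $\mu=0$ in the median, then the tiling $\int_0^T\norm{H_T}\le\int_0^\infty\norm{H}$. For the lower bound the paper uses the same inclusion. This tacitly identifies $\norm{G(j\omega)}$ with the worst-case gain over \emph{real} sinusoids of frequency $\omega$, which is exact in the SISO case.

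The gap you flag for MIMO is genuine, and no argument can close it. For $p\ge2$, with $\norm{G(j\omega)}$ the complex spectral norm, the inequality $\norm{G(j2\pi/T)}\le\gamma\ac(T)$ is false. Take $m=1$, $p=2$ and
\[
A=\begin{bmatrix}-a&-\omega\\ \omega&-a\end{bmatrix},\quad B=\begin{bmatrix}1\\ 0\end{bmatrix},\quad C=I,\quad T=2\pi/\omega .
\]
Then $H(t)=\ee^{-at}[\cos\omega t\ \ \sin\omega t]\trans$ and $H_T=H/(1-\ee^{-aT})$. The transfer matrix satisfies
\[
\norm{G(j\omega)}^2=\frac{a^2+2\omega^2}{a^2(a^2+4\omega^2)}>\frac{1}{2a^2},
\]
while choosing $\mu=0$ in the median gives
\[
\gamma\ac(T)\le\sup_\alpha\int_0^T\frac{\ee^{-at}\abs{\cos(\omega t-\alpha)}}{1-\ee^{-aT}}\,\dif t\le\frac{2}{\pi a}\cdot\frac{aT}{1-\ee^{-aT}}.
\]
Since $aT/(1-\ee^{-aT})\to1$ as $a\to0$ and $2/\pi<1/\sqrt2$, you get $\gamma\ac(T)<\norm{G(j2\pi/T)}$ for small $a$. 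For instance, $\omega=1$, $a=0.01$ gives $\gamma\ac(2\pi)\le65.7<70.7<\norm{G(j)}$. The mechanism is that the output is circularly polarized. A real sinusoid then reaches only about $\norm{G(j\omega)}/\sqrt2$, and the bang-bang input improves this only by the factor $4/\pi<\sqrt2$.

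So your fallback is the correct resolution, not a weakness of your argument. Either read $\norm{G(j\omega)}$ as the real-sinusoid gain, in which case the claim is exactly the class inclusion, or restrict the system class. The restriction can be relaxed from SISO to $p=1$ with arbitrary $m$. Your steps 1--4 go through with $w=\overline{G(j\omega)}\trans/\norm{G(j\omega)}$, because the scalar output has sup norm $\abs{G(j\omega)w}=\norm{G(j\omega)}$ and $\norm{u_\theta}_\infty\le\norm{w}=1$. This already covers the paper's PLL example, which has $m=2$, $p=1$.

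A minor correction: the real bilinear bound $\max_{v,a}\abs{v\trans G(j\omega)a}$ can lose a factor $1/2$, not just $1/\sqrt2$. An example is $G(j\omega)=xx\trans$ with $x=[1\ \ j]\trans$, where the bilinear maximum is $1$ and $\norm{G(j\omega)}=2$.
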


Figure~\ref{fig: linear} shows qualitatively the three gains for an example two-dimensional SISO LTI system. The AC component $\gamma\ac$ of the PAG captures the high-frequency attenuation feature of the system and converges to the classical AG for long-period signals. At the same time, the low-frequency gain is captured by the DC component $\gamma\dc$. In this sense, PAG occupies a ``middle ground'' between the classical AG and the frequency response.

We also note that, in agreement with intuition and Proposition~\ref{pr: integer periods}, the peaks of $\gamma\ac$ occur at the periods $2\pi k/\omega_0$ where $k \in \bbN$ and $\omega_0$ is the resonant frequency of the system.

Finally, observe that for the exact PAG
\begin{align}
    \normb{\gamma_T\pb{\rho_T(u)}}_1
    &\equiv \gamma\dc \norm{u\dc} + \gamma\ac(T) \norm{u\ac}_\infty \notag \\
    &\leq \max\set{\gamma\dc, \gamma\ac(T)} \norm{\rho_T(u)}_1
\end{align}
and by Proposition~\ref{pr: linear gains comparison} inequality~\eqref{eq: scenario PAG better than AG} holds, at least as a non-strict inequality. Therefore, in the linear case PAG is \emph{at least as sharp} as the classical AG (in the sense of Section~\ref{se: sharpness}).

\section{Nonlinear Case}
\label{se: nonlinear}

\subsection{General System}

Consider system~\eqref{eq: system general} and suppose that we are in the setting of Section~\ref{se: scenario}, i.e., Assumptions~\ref{as: LISS} and~\ref{as: contraction} hold and guarantee that for every $u \in \calU_T$ there exists a unique periodic solution $\hat x$ in a compact set $\calS(\norm{u}_\infty)$ that attracts all other solutions starting in $\calX_0$. Our goal is to attain a PAG by estimating $\rho_T(\hat y)$ of the periodic output $\hat y$. The estimation uses the following quadratic bounds on the nonlinear part of the system. They hold, e.g., if $f$ and $g$ have continuous second derivatives.

\begin{assumption}
    \label{as: quadratic bounds}
    For all arguments $x, \bar x \in \calS(u\mmax)$ and $\norm{u}, \norm{\bar u} \leq u\mmax$ functions $f$ and $g$ in~\eqref{eq: system general} satisfy
    \begin{subequations}
        \vspace{-2mm}
        \begin{multline}
            \label{eq: f quadratic bound}
            \normB{f(x,u) - f(\bar x, \bar u) \\
            - \pd[f]{x}(\bar x, \bar u) (x - \bar x)
            - \pd[f]{u}(\bar x, \bar u) (u - \bar u)} \\
            \leq M_f \pb{\norm{x - \bar x}^2 + \norm{u - \bar u}^2},
        \end{multline}
        \vspace{-4mm}
        \begin{equation}
            \label{eq: g quadratic bound}
            \normB{g(x) - g(\bar x)
            - \pd[g]{x}(\bar x) (x - \bar x)}
            \leq M_g \norm{x - \bar x}^2,
        \end{equation}
        \vspace{-5mm}
        \begin{gather}
            f(0,0) = 0, \quad \pd[f]{x}(0,0) = 0, \quad \pd[f]{u}(0, 0) = 0, \\
            g(0) = 0, \quad \pd[g]{x}(0) = 0
        \end{gather}
    \end{subequations}
    where $\partial f/\partial x$, $\partial f/\partial u$, and $\partial g/\partial x$ are the Jacobian matrices and $M_f, M_g > 0$ are some constants.
\end{assumption}


We start by estimating $f(\hat x, u)$ along the periodic solution $\hat x$ based on presupposed knowledge of $\rho_T(\hat x)$.

\begin{lemma}
    \label{le: bound on f hat}
    Let $\hat f(t) := f(\hat x(t), u(t))$ with any $T$-periodic $\hat x$ and $u$. Then under Assumption~\ref{as: quadratic bounds}
    \begin{equation}
        \label{eq: bound on f hat}
        \rho_T(\hat f) \preceq M_f
        \begin{bmatrix}
            \norm{\rho_T(\hat x)}^2 + \norm{\rho_T(u)}^2 \\
            2\pb{\norm{\hat x\ac}_\infty^2 + \norm{u\ac}_\infty^2}
        \end{bmatrix}.
    \end{equation}
\end{lemma}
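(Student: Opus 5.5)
Since $\hat x$ and $u$ are $T$-periodic, $\hat f$ is $T$-periodic as well. Hence $\rho_T(\hat f) = [\norm{\hat f\dc},\ \norm{\hat f\ac}_\infty]\trans$ and $\rho_T(\hat x)=[\norm{\hat x\dc},\ \norm{\hat x\ac}_\infty]\trans$. I read $\norm{\rho_T(\hat x)}^2=\norm{\hat x\dc}^2+\norm{\hat x\ac}_\infty^2$ as the Euclidean norm of this $\bbR^2$-vector. Throughout I use the quadratic bound~\eqref{eq: f quadratic bound}, with two different choices of the linearization point $(\bar x,\bar u)$: one for the DC row and one for the AC row.

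\emph{DC component.} Apply~\eqref{eq: f quadratic bound} with $(\bar x,\bar u)=(0,0)$. Since $f(0,0)=0$ and both Jacobians vanish at the origin, this gives pointwise
\[
\norm{\hat f(t)} \le M_f\pb{\norm{\hat x(t)}^2+\norm{u(t)}^2}.
\]
This pointwise estimate alone is too crude: $\norm{\hat x(t)}^2$ can be as large as $(\norm{\hat x\dc}+\norm{\hat x\ac}_\infty)^2$, which contains a cross term. The fix is to average rather than take the supremum. We have $\norm{\hat f\dc}\le \frac1T\int_0^T\norm{\hat f}\,\dif t \le M_f\,\frac1T\int_0^T\pb{\norm{\hat x}^2+\norm{u}^2}\dif t$. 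Expanding $\norm{\hat x\dc+\hat x\ac(t)}^2$, the cross term $2\hat x\dc\cdot\hat x\ac(t)$ integrates to zero because $\mean{\hat x\ac}_T=0$. What remains is
\[
\frac1T\int_0^T\norm{\hat x}^2\,\dif t=\norm{\hat x\dc}^2+\frac1T\int_0^T\norm{\hat x\ac}^2\,\dif t\le\norm{\hat x\dc}^2+\norm{\hat x\ac}_\infty^2 .
\]
The same computation applies to $u$, and this yields the first row.

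\emph{AC component.} The AC part satisfies $\hat f\ac(t)=\hat f(t)-\frac1T\int_0^T\hat f(s)\,\dif s=\frac1T\int_0^T\pb{\hat f(t)-\hat f(s)}\dif s$. So it suffices to bound $\norm{\hat f(t)-\hat f(s)}$ uniformly in $t$ and $s$. The difference $\hat f(t)-\hat f(s)$ no longer depends on the DC parts, but a naive mean value estimate would bring in the Jacobian at a nonzero point, which is not quadratically small. The first thing I would try is to linearize around the DC point $(\bar x,\bar u)=(\hat x\dc,u\dc)$ and write $\hat f(t)=f(\hat x\dc,u\dc)+L\,(\hat x\ac(t),u\ac(t))+r(t)$. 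Here $L$ is the Jacobian at the DC point and $\norm{r(t)}\le M_f\pb{\norm{\hat x\ac(t)}^2+\norm{u\ac(t)}^2}$. The constant term drops out of the AC part. The linear term has zero mean, so it passes into $\hat f\ac$ unchanged, and it is of first order in the AC magnitudes. That linear term is the main obstacle, and I do not yet see how to control it at second order.

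Leaving that issue aside, the remainder contributes $r(t)-\mean{r}_T$. Its norm is at most $\norm{r(t)}+\norm{\mean{r}_T}\le 2M_f\pb{\norm{\hat x\ac}_\infty^2+\norm{u\ac}_\infty^2}$, which is exactly the factor $2$ in the second row. Alternatively one could pair $t$ and $s$ symmetrically, using the bound~\eqref{eq: f quadratic bound} with $(x,u)=(\hat x(t),u(t))$ and $(\bar x,\bar u)=(\hat x(s),u(s))$. That version needs the Jacobian contributions to cancel after averaging over $s$, and I would check whether they do. The delicate point in either version is therefore the first-order Jacobian term at the DC point, not the remainder.

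The \emph{first-row} argument is exact as written. For the second row, I would first verify whether the intended bound implicitly treats the linear part separately, as happens later in the paper's linearization-based approach. If it does, the stated estimate follows from the remainder bound above.
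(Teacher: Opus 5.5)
\textbf{The DC row.} Your DC row is correct, and you reach it by a slightly different route from the paper's. The paper splits $\hat f\dc = f(\hat x\dc,u\dc) + \pb{\hat f\dc - f(\hat x\dc,u\dc)}$ and applies \eqref{eq: f quadratic bound} twice. It uses base point $(0,0)$ for the first term, and base point $(\hat x\dc,u\dc)$ for the ``Jensen gap'', where the Jacobian term averages to zero. You instead average the base-$(0,0)$ pointwise bound once and remove the cross term $2\hat x\dc\cdot\hat x\ac$ via $\mean{\hat x\ac}_T=0$. Both routes arrive at the same intermediate estimate $M_f\pb{\norm{\hat x\dc}^2+\mean{\norm{\hat x\ac}^2}_T+\norm{u\dc}^2+\mean{\norm{u\ac}^2}_T}$. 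Yours uses one linearization point and avoids the paper's tacit requirement $\hat x\dc\in\calS(u\mmax)$, which is not automatic because $\calS$ need not be convex.

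\textbf{The AC row.} You stop at exactly the right place, but there is no missing idea to supply: the second row is false as stated. Take scalar $f(x,u)=x^2$, which satisfies Assumption~\ref{as: quadratic bounds} globally with $M_f=1$, since its Taylor remainder is exactly $(x-\bar x)^2$. Let $u\equiv0$ and $\hat x(t)=1+\epsilon\sin(2\pi t/T)$ with $0<\epsilon<1$. Then $\hat f\ac(t)=2\epsilon\sin(2\pi t/T)-\frac{\epsilon^2}{2}\cos(4\pi t/T)$, so $\norm{\hat f\ac}_\infty\ge2\epsilon+\epsilon^2/2$ (evaluate at $t=T/4$). The lemma, however, asserts $\norm{\hat f\ac}_\infty\le2M_f\norm{\hat x\ac}_\infty^2=2\epsilon^2$. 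The culprit $2\hat x\dc\,\hat x\ac(t)$ is precisely your linear term $L\,(\hat x\ac(t),u\ac(t))$. The same thing happens for genuine periodic solutions, e.g.\ of $\dot x=-2x+u+x^2$ with $u\dc\ne0$ and small $u\ac$. The counterexample also rules out your symmetric $(t,s)$ pairing.

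The paper's proof fails at the same spot. It bounds $\normb{f(\hat x,u)-f(\hat x\dc,u\dc)}_\infty$ by $M_f\pb{\norm{\hat x\ac}_\infty^2+\norm{u\ac}_\infty^2}$ ``using \eqref{eq: f quadratic bound}'', which silently discards $\pd[f]{x}(\hat x\dc,u\dc)\,\hat x\ac+\pd[f]{u}(\hat x\dc,u\dc)\,u\ac$. So the linear part is not treated separately anywhere; it is simply lost.

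\textbf{What is actually provable.} Set $J_x:=\pd[f]{x}(\hat x\dc,u\dc)$ and $J_u:=\pd[f]{u}(\hat x\dc,u\dc)$. Your decomposition establishes
\begin{multline*}
\norm{\hat f\ac}_\infty\le\norm{J_x}\,\norm{\hat x\ac}_\infty+\norm{J_u}\,\norm{u\ac}_\infty\\
+2M_f\pb{\norm{\hat x\ac}_\infty^2+\norm{u\ac}_\infty^2}.
\end{multline*}
The Jacobians vanish at the origin but not at a nonzero DC point. So DC does spill into AC, contrary to the remark following Theorem~\ref{th: nonlinear PAG}. The stated AC row is recovered only when this term vanishes, e.g.\ for $\hat x\dc=0$ and $u\dc=0$, where your remainder estimate gives exactly the factor $2$. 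The analogous $M_g$ term in \eqref{eq: bound on y hat} has the same defect. Hence Theorem~\ref{th: nonlinear PAG} and Corollary~\ref{cor: special case} inherit it whenever DC components are present.
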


\vspace{2mm}
\begin{proof}
    From~\eqref{eq: f quadratic bound} with $(\bar x, \bar u) = (0, 0)$ we obtain
    \begin{equation}
        \normb{f(\hat x\dc, u\dc)} \leq M_f\pb{\norm{\hat x\dc}^2 + \norm{u\dc}^2}
    \end{equation}
    and with $(\bar x, \bar u) = (\hat x\dc, u\dc)$~-- the ``Jensen gap'' estimation
    \begin{align}
        \label{eq: Jensen gap}
        \normb{\hat f\dc - f(\hat x\dc, u\dc)}
        &\equiv \normb{\meanb{f(\hat x, u)}_T - f\pb{\mean{\hat x}_T, \mean{u}_T}} \notag \\
        &\leq M_f\pb{\mean{\hat x\ac^2}_T + \mean{u\ac^2}_T} \notag \\
        &\leq M_f\pb{\norm{\hat x\ac}_\infty^2 + \norm{u\ac}_\infty^2}
    \end{align}
    which together produce the first component of the vector inequality~\eqref{eq: bound on f hat}. The second component follows from
    \begin{align}
        \norm{\hat f\ac}_\infty
        &\equiv \normb{f(\hat x, u) - \hat f\dc}_\infty \notag \\
        &\leq \normb{f(\hat x, u) - f(\hat x\dc, u\dc)}_\infty \!\!+ \normb{\hat f\dc - f(\hat x\dc, u\dc)} \notag \\
        &\leq 2M_f\pb{\norm{\hat x\ac}_\infty^2 + \norm{u\ac}_\infty^2}
    \end{align}
    with the latter estimation using~\eqref{eq: f quadratic bound} and~\eqref{eq: Jensen gap} again.
\end{proof}

Next we are going to bound $\rho_T(\hat x)$ and $\rho_T(\hat y)$ by treating the signal $\hat f = f(\hat x, u)$ as another input with presupposed $\rho_T(\hat f)$. The following linear PAGs obtainable via Theorem~\ref{th: linear} are used for that purpose:
\begin{subequations}
    \begin{itemize}
        \item $\gux_T(\rho) = \Gux_T \rho$ with $\Gux_T = \begin{bmatrix}
            \gux\dc & 0 \\ 0 & \gux\ac(T)
        \end{bmatrix}$ is the PAG of
        \begin{equation}
            \dot x = Ax + Bu, \quad
            y = x;
        \end{equation}
        \item $\gfx_T(\rho) = \Gfx_T \rho$ with $\Gfx_T  = \begin{bmatrix}
            \gfx\dc & 0 \\ 0 & \gfx\ac(T)
        \end{bmatrix}$ is the PAG of
        \begin{equation}
            \dot x = Ax + Fu, \quad
            y = x;
        \end{equation}
        \item $\guy_T(\rho) = \Guy_T \rho$ with $\Guy_T  = \begin{bmatrix}
            \guy\dc & 0 \\ 0 & \guy\ac(T)
        \end{bmatrix}$ is the PAG of
        \begin{equation}
            \dot x = Ax + Bu, \quad
            y = Cx;
        \end{equation}
        \item $\gfy_T(\rho) = \Gfy_T \rho$ with $\Gfy_T  = \begin{bmatrix}
            \gfy\dc & 0 \\ 0 & \gfy\ac(T)
        \end{bmatrix}$ is the PAG of
        \begin{equation}
            \dot x = Ax + Fu, \quad
            y = Cx.
        \end{equation}
    \end{itemize}
\end{subequations}
Then by the linearity and the triangle inequality
\begin{equation}
    \label{eq: bound on x hat}
    \rho_T(\hat x) \preceq \gux_T\pb{\rho_T(u)}
    + \gfx_T\pb{\rho_T(\hat f)}
\end{equation}
and
\begin{multline}
    \label{eq: bound on y hat}
    \rho_T(\hat y) \preceq \guy_T\pb{\rho_T(u)}
    + \gfy_T\pb{\rho_T(\hat f)} \\
    + M_g \begin{bmatrix}
        \norm{\rho_T(\hat x)}^2 \\
        2\norm{\hat x\ac}_\infty^2
    \end{bmatrix}
\end{multline}
where the last summand comes from the estimation of $\rho_T\pb{g(\hat x)}$ similarly to Lemma~\ref{le: bound on f hat}.

The following theorem resolves the ``circular'' estimations~\eqref{eq: bound on f hat}, \eqref{eq: bound on x hat}, and~\eqref{eq: bound on y hat} taking into account the bound
\begin{equation}
    \label{eq: a priori bound x hat}
    \norm{\hat x}_\infty \leq \max_{x \in \calS(\norm{\rho_T(u)}_1)} \norm{x}
\end{equation}
established via LISS (Assumption~\ref{as: LISS}) and~\eqref{eq: 1-norm inf-norm inequality u}.

\begin{theorem}
    \label{th: nonlinear PAG}
    Under Assumptions~\ref{as: LISS}--\ref{as: quadratic bounds}, given $u \in \calU_T$, let
    \begin{subequations}
        \label{eq: solution xi ac}
        \begin{equation}
            \xi\ac = \begin{cases}
                \dfrac{1 - \sqrt{1 - 4 a\ac c\ac}}{2a\ac},
                & 4a\ac b^2 + c\ac \leq 2b, \\
                2b, & \mathrm{otherwise}
            \end{cases}
        \end{equation}
        where
        \begin{align}
            b &= \max\nolimits_{x \in \calS(\norm{\rho_T(u)}_1)} \norm{x}, \\
            a\ac &= 2M_f \gfx\ac(T), \\
            c\ac &= 2M_f \gfx\ac(T) \norm{u\ac}_\infty^2 + \gux\ac(T) \norm{u\ac}_\infty.
        \end{align}
    \end{subequations}
    After that, let
    \begin{subequations}
        \label{eq: solution xi dc}
        \begin{equation}
            \xi\dc = \begin{cases}
                \dfrac{1 - \sqrt{1 - 4 a\dc c\dc}}{2a\dc},
                & a\dc b^2 + c\dc \leq b, \\
                b, & \mathrm{otherwise}
            \end{cases}
        \end{equation}
        where
        \begin{align}
            a\dc &= M_f \gfx\dc, \\
            c\dc &= M_f \gfx\dc \pb{\xi\ac^2 + \norm{\rho_T(u)}^2} + \gux\dc \norm{u\dc}.
        \end{align}
    \end{subequations}
    Then
    \begin{align}
        \gamma_T(\rho_T(u)) &= \Guy_T \rho_T(u)
        + M_f \Gfy_T \begin{bmatrix}
            \norm{\rho_T(u)}^2 \\ 2\norm{u\ac}_\infty^2
        \end{bmatrix} \notag \\
        \label{eq: nonlinear PAG}
        &\phantom{{}=} + \pb{M_f \Gfy_T + M_g \bracksx{\begin{smallmatrix}
            1 & 0 \\ 0 & 1
        \end{smallmatrix}}} \begin{bmatrix}
            \xi\dc^2 + \xi\ac^2 \\ 2\xi\ac^2
        \end{bmatrix}
    \end{align}
    is a valid conservative PAG of~\eqref{eq: system general}.
\end{theorem}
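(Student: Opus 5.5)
The plan is to turn the circular estimates \eqref{eq: bound on f hat}, \eqref{eq: bound on x hat}, \eqref{eq: bound on y hat} into scalar quadratic inequalities for the two unknowns
\begin{equation*}
X\ac := \norm{\hat x\ac}_\infty, \qquad X\dc := \norm{\hat x\dc}.
\end{equation*}
Each inequality is then resolved with the a priori bound \eqref{eq: a priori bound x hat}. I treat the AC component first, because the DC estimate of $\hat f$ in Lemma~\ref{le: bound on f hat} involves $X\ac$, while the AC estimate involves only AC quantities.

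\emph{AC step.} Take the second component of \eqref{eq: bound on x hat} and insert the second component of \eqref{eq: bound on f hat}. This gives
\begin{equation*}
X\ac \leq \gux\ac(T)\norm{u\ac}_\infty + 2M_f\gfx\ac(T)\pb{X\ac^2 + \norm{u\ac}_\infty^2} = a\ac X\ac^2 + c\ac,
\end{equation*}
that is, $q(X\ac) := a\ac X\ac^2 - X\ac + c\ac \geq 0$. The a priori bound supplies a second constraint. By \eqref{eq: a priori bound x hat}, $\norm{\hat x}_\infty \leq b$. Since $\hat x\dc$ is an average of $\hat x$, also $\norm{\hat x\dc} \leq b$, and hence $X\ac \leq 2b$.

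Now use the case condition. If $4a\ac b^2 + c\ac \leq 2b$, then $q(2b) \leq 0$. This forces the discriminant $1 - 4a\ac c\ac$ to be nonnegative, and it places $2b$ between the two roots $r_1 \leq r_2$ of $q$. On $[0,2b]$ the set where $q \geq 0$ is $[0,r_1]$, apart from the degenerate endpoint $X\ac = 2b = r_2$, so $X\ac \leq r_1 = \xi\ac$. In the other case I simply keep $X\ac \leq 2b = \xi\ac$.

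I expect this root-selection argument to be the main obstacle. The inequality $q \geq 0$ alone admits the spurious branch $X\ac \geq r_2$, and only the LISS bound excludes it. I would first handle the boundary case $q(2b) = 0$ with $r_1 < r_2$ by a continuity/perturbation argument; failing that, I would note that it is nongeneric and, if necessary, replace $\leq$ by $<$ in the case condition.

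\emph{DC step.} Take the first components of \eqref{eq: bound on x hat} and \eqref{eq: bound on f hat}, and use $\norm{\rho_T(\hat x)}^2 = X\dc^2 + X\ac^2 \leq X\dc^2 + \xi\ac^2$. This yields
\begin{equation*}
X\dc \leq a\dc X\dc^2 + c\dc,
\end{equation*}
with $a\dc$ and $c\dc$ exactly as in \eqref{eq: solution xi dc}. Together with $X\dc \leq b$, the same root-selection argument gives $X\dc \leq \xi\dc$.

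\emph{Output step.} Substitute $\rho_T(\hat f)$ from Lemma~\ref{le: bound on f hat} into \eqref{eq: bound on y hat}. Then bound the $\hat x$-quantities using $X\dc \leq \xi\dc$, $X\ac \leq \xi\ac$, and $\norm{\rho_T(\hat x)}^2 \leq \xi\dc^2 + \xi\ac^2$; this is legitimate because $\Gfy_T$ has nonnegative entries. Splitting the vector
\begin{equation*}
\begin{bmatrix}\xi\dc^2+\xi\ac^2+\norm{\rho_T(u)}^2 \\ 2\xi\ac^2 + 2\norm{u\ac}_\infty^2\end{bmatrix}
\end{equation*}
into its $u$-part and its $\xi$-part reproduces \eqref{eq: nonlinear PAG}.

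Finally, as in Section~\ref{se: scenario}, all solutions converge to $\hat x$, so this bound on $\rho_T(\hat y)$ is a bound on $\rho_T(y)$. To qualify as a PAG, the right-hand side must also be $\preceq$-nondecreasing in $\rho_T(u)$. I would check this by noting the following: $b$ is nondecreasing whenever $\calS(\cdot)$ is nested; $a$ is constant; $c$ is nondecreasing in its arguments; and the smaller root $r_1$ is increasing in $c$. If nestedness of $\calS$ is not granted, I would take the monotone envelope (the supremum over smaller arguments), which preserves validity.
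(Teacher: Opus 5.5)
Your proposal is correct and follows the paper's proof essentially step by step. Like you, the paper combines \eqref{eq: bound on f hat} and \eqref{eq: bound on x hat} into the scalar inequalities $a\ac X\ac^2 - X\ac + c\ac \geq 0$ and $a\dc X\dc^2 - X\dc + c\dc \geq 0$, uses the a priori bounds $X\dc \leq b$, $X\ac \leq 2b$ to select the smaller-root branch, solves the AC inequality first, feeds $\xi\ac$ into $c\dc$, and substitutes into \eqref{eq: bound on y hat}; the only difference is that it phrases the root selection as a maximum over $\{\xi \in [0,b] \colon a\xi^2 - \xi + c \geq 0\}$, which also gives monotonicity in $a$, $b$, $c$ at once.

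The boundary case $q(b)=0$, $b = r_2 > r_1$ that you flag is silently glossed over in the paper, whose maximum formulation actually returns $b$ there rather than $r_1$. So your strict-inequality fix is a genuine refinement for the DC step. For the AC step the case is vacuous: $\norm{\hat x\ac}_\infty = 2b$ forces $\norm{\hat x\dc} = b$, hence $\hat x \equiv \hat x\dc$ by strict convexity of the Euclidean ball, hence $b = 0$.
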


\begin{proof}
    Combining the inequalities~\eqref{eq: bound on f hat} and~\eqref{eq: bound on x hat} and writing them out element-wise we arrive at
    \begin{subequations}
        \begin{alignat}{4}
            a\dc &\norm{\hat x\dc}^2 &&- \norm{\hat x\dc} &{}+ c\dc &\geq 0, \\
            a\ac &\norm{\hat x\ac}_\infty^2 &&- \norm{\hat x\ac}_\infty &{}+ c\ac &\geq 0,
        \end{alignat}
    \end{subequations}
    the coefficients $a$ and $c$ being the same as in~\eqref{eq: solution xi ac} and~\eqref{eq: solution xi dc} with the only exception that $c\dc$ at this point has $\norm{\hat x\ac}_\infty$ in it in place of $\xi\ac$.
    From~\eqref{eq: a priori bound x hat} we have
    \begin{equation}
        \norm{\hat x\dc} \leq b, \quad
        \norm{\hat x\ac}_\infty \leq 2b.
    \end{equation}
    All of this implies
    \begin{subequations}
        \label{eq: set for x dc and x ac}
        \begin{alignat}{3}
            \label{eq: set for x dc}
            &\norm{\hat x\dc} &&\leq \max\set{\xi \in [0,b] \colon
            &&a\dc \xi^2 - \xi + c\dc \geq 0}, \\
            \label{eq: set for x ac}
            &\norm{\hat x\ac}_\infty &&\leq \max\set{\xi \in [0,2b] \colon
            &&a\ac \xi^2 - \xi + c\ac \geq 0}.
        \end{alignat}
    \end{subequations}
    Estimation~\eqref{eq: set for x ac} is independent of~\eqref{eq: set for x dc} and can be simplified first. It yields $\norm{\hat x\ac}_\infty \leq \xi\ac$. Thus obtained $\xi\ac$ is substituted in place of $\norm{\hat x\ac}_\infty$ that appears in the coefficient $c\dc$ in~\eqref{eq: set for x dc} which then implies $\norm{\hat x\dc} \leq \xi\dc$ and overall
    \begin{equation}
        \label{eq: bound on x hat solved}
        \rho_T(\hat x) \preceq
        \begin{bmatrix}
            \xi\dc \\ \xi\ac
        \end{bmatrix}.
    \end{equation}
    Gathering~\eqref{eq: bound on f hat}, \eqref{eq: bound on y hat}, and \eqref{eq: bound on x hat solved} we attain~\eqref{eq: nonlinear PAG}.
    
    Note that $\xi_{\rm dc, ac}$ are determined as the right-hand sides of~\eqref{eq: set for x dc and x ac} and thus are non-decreasing with $a_{\rm dc, ac}$, $c_{\rm dc, ac}$, and $b$. The latter coefficients are non-decreasing with $\norm{u\dc}$ and $\norm{u\ac}_\infty$. Therefore, \eqref{eq: nonlinear PAG} is indeed $\preceq$-non-decreasing as required by Definition~\ref{def: pag}.
\end{proof}

\begin{remark}
    Unlike the linear PAG~\eqref{eq: exact AG linear} whose matrix is diagonal, nonlinear PAG~\eqref{eq: nonlinear PAG} ``mixes'' the AC and DC components: AC ``spills'' into DC but not the other way around. Such mixing is to be expected from a nonlinear system because every even power of a pure sine wave has non-zero DC offset. As nonlinearity vanishes with $M_f, M_g \to 0$, conservative PAG~\eqref{eq: nonlinear PAG} converges to the exact linear PAG~\eqref{eq: exact AG linear}.
\end{remark}

\subsection{System of a Special Structure}

In the example of Section~\ref{se: example} our system is of a special ``control-Lurie'' type~-- with a linear output map and nonlinearity $f$ dependent only on $y$ rather than the entire $x$:
\begin{subequations}
    \label{eq: system special}
    \begin{align}
        \dot x &= Ax + Bu + F f(y,u), \\
        y &= Cx.
    \end{align}
\end{subequations}
For~\eqref{eq: system special}, Assumption~\ref{as: quadratic bounds} is replaced with the following.

\begin{assumption}
    \label{as: quadratic bounds special}
    For all arguments $x, \bar x \in \calS(u\mmax)$ and $\norm{u}, \norm{\bar u} \leq u\mmax$ function $f$ in~\eqref{eq: system special} satisfies
    \begin{subequations}
        \begin{multline}
            \label{eq: f quadratic bound special}
            \normB{f(Cx, u) - f(C \bar x, \bar u) \\
            - \pd[f]{y}(C \bar x, \bar u) C (x - \bar x)
            - \pd[f]{u}(C \bar x, \bar u) (u - \bar u)} \\
            \leq M_f \pb{\norm{C(x - \bar x)}^2 + \norm{u - \bar u}^2},
        \end{multline}
        \begin{equation}
            f(0,0) = 0, \quad \pd[f]{y}(0,0) = 0, \quad \pd[f]{u}(0,0) = 0
        \end{equation}
    \end{subequations}
    where $\partial f/\partial y$ and $\partial f/\partial u$ are the Jacobian matrices and $M_f > 0$ is some constant.
\end{assumption}

Theorem~\ref{th: nonlinear PAG} specializes to~\eqref{eq: system special} in the following way.

\begin{corollary}
    \label{cor: special case}
    Under Assumptions~\ref{as: LISS}, \ref{as: contraction}, \ref{as: quadratic bounds special}, given $u \in \calU_T$, let
    \begin{subequations}
        \label{eq: solution xi ac special}
        \begin{equation}
            \eta\ac = \begin{cases}
                \dfrac{1 - \sqrt{1 - 4 a\ac c\ac}}{2a\ac},
                & 4a\ac b^2 + c\ac \leq 2b, \\
                2b, & \mathrm{otherwise}
            \end{cases}
        \end{equation}
        where
        \begin{align}
            b &= \gamma(\norm{\rho_T(u)}_1) \overset{\eqref{eq: scenario AG}}{\equiv}
            \max\nolimits_{x \in \calS(\norm{\rho_T(u)}_1)} \norm{Cx}, \\
            a\ac &= 2M_f \gfy\ac(T), \\
            c\ac &= 2M_f \gfy\ac(T) \norm{u\ac}_\infty^2 + \guy\ac(T) \norm{u\ac}_\infty.
        \end{align}
    \end{subequations}
    After that, let
    \begin{subequations}
        \label{eq: solution xi dc special}
        \begin{equation}
            \eta\dc = \begin{cases}
                \dfrac{1 - \sqrt{1 - 4 a\dc c\dc}}{2a\dc},
                & a\dc b^2 + c\dc \leq b, \\
                b, & \mathrm{otherwise}
            \end{cases}
        \end{equation}
        where
        \begin{align}
            a\dc &= M_f \gfy\dc, \\
            c\dc &= M_f \gfy\dc \pb{\eta\ac^2 + \norm{\rho_T(u)}^2} + \guy\dc \norm{u\dc}.
        \end{align}
    \end{subequations}
    Then
    \begin{equation}
        \label{eq: nonlinear PAG special}
        \gamma_T(\rho_T(u)) = \begin{bmatrix}
            \eta\dc \\ \eta\ac
        \end{bmatrix}
    \end{equation}
    is a valid conservative PAG of~\eqref{eq: system special}.
\end{corollary}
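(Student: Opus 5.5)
The plan is to repeat the argument of Theorem~\ref{th: nonlinear PAG} with the output $\hat y = C\hat x$ in place of the state $\hat x$ as the variable to be estimated. This works because in~\eqref{eq: system special} the nonlinearity depends on $\hat x$ only through $\hat y$. Let $\hat x$ be the attracting $T$-periodic solution guaranteed by Assumptions~\ref{as: LISS} and~\ref{as: contraction}, and set $\hat y = C\hat x$ and $\hat f(t) = f(\hat y(t), u(t))$.

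First, I would establish the analogue of Lemma~\ref{le: bound on f hat}:
\begin{equation*}
\rho_T(\hat f) \preceq M_f \begin{bmatrix} \norm{\rho_T(\hat y)}^2 + \norm{\rho_T(u)}^2 \\ 2\pb{\norm{\hat y\ac}_\infty^2 + \norm{u\ac}_\infty^2} \end{bmatrix}.
\end{equation*}
The proof is word for word that of Lemma~\ref{le: bound on f hat}, now using~\eqref{eq: f quadratic bound special}. One takes $(\bar x,\bar u)=(0,0)$ to bound $f(\hat y\dc,u\dc)$, and $(\bar x,\bar u)=(\hat x\dc,u\dc)$ for the Jensen gap. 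The latter uses $C\hat x\dc = \hat y\dc$ and $C(\hat x-\hat x\dc)=\hat y\ac$, so only $\norm{\hat y\ac}$ appears.

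Second, since $\hat y$ is the output of the linear system $\dot x = Ax + Bu + F\hat f$, $y=Cx$, linearity and the triangle inequality give
\begin{equation*}
\rho_T(\hat y) \preceq \guy_T\pb{\rho_T(u)} + \gfy_T\pb{\rho_T(\hat f)} .
\end{equation*}
This is~\eqref{eq: bound on y hat} with $g=0$, so no $M_g$ term appears. Substituting the bound on $\rho_T(\hat f)$ and writing it out componentwise gives two scalar inequalities:
\begin{equation*}
a\ac \norm{\hat y\ac}_\infty^2 - \norm{\hat y\ac}_\infty + c\ac \geq 0
\end{equation*}
with $a\ac,c\ac$ as in~\eqref{eq: solution xi ac special}, and
\begin{equation*}
a\dc\norm{\hat y\dc}^2 - \norm{\hat y\dc} + c\dc \geq 0,
\end{equation*}
where $c\dc$ contains $\norm{\hat y\ac}_\infty^2$. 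Here I would also check that $\norm{\rho_T(\hat y)}^2 = \norm{\hat y\dc}^2 + \norm{\hat y\ac}_\infty^2$, which splits into the $a\dc$ term and the $\eta\ac^2$ part of $c\dc$.

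Third, the a priori bound comes from Assumption~\ref{as: LISS} and~\eqref{eq: 1-norm inf-norm inequality u}. Since $\hat x \in \calS(\norm{\rho_T(u)}_1)$, we have $\norm{\hat y}_\infty \leq b$. Hence $\norm{\hat y\dc} = \norm{\mean{\hat y}_T}\leq b$ and $\norm{\hat y\ac}_\infty \leq 2b$. Each quantity is then bounded by the largest root-admissible point of $[0,b]$ or $[0,2b]$, as in~\eqref{eq: set for x dc and x ac}.

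The step needing care is the elementary resolution of these sets into the closed forms of $\eta\ac$ and $\eta\dc$. Consider the quadratic $q(\xi)=a\xi^2-\xi+c$ with $a,c\geq 0$. If the condition $a(2b)^2 + c \leq 2b$ holds (resp.\ $ab^2+c\leq b$ in the DC case), then $q$ at the right endpoint is $\leq 0$. If it is strictly negative, the endpoint lies between the roots, the discriminant $1-4ac$ is positive, and the feasible set in the interval is $[0,\xi_-]$ with $\xi_-=(1-\sqrt{1-4ac})/(2a)$. In the equality case the bound $\xi_-$ is still valid. Otherwise the trivial bound $2b$ (resp.\ $b$) is used.

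Solving the AC inequality first gives $\norm{\hat y\ac}_\infty\leq\eta\ac$. Substituting this into $c\dc$ only enlarges $c\dc$, and therefore enlarges the feasible set, so the result is still an upper bound and gives $\norm{\hat y\dc}\leq\eta\dc$. Hence $\rho_T(\hat y)\preceq[\eta\dc;\eta\ac]$.

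Finally, monotonicity follows as in the proof of Theorem~\ref{th: nonlinear PAG}. The coefficients $a$, $c$ and $b$ are non-decreasing in $\norm{u\dc}$ and $\norm{u\ac}_\infty$ (with $b$ non-decreasing by monotonicity of $\gamma$). The maximal feasible points are non-decreasing in these coefficients. So the map~\eqref{eq: nonlinear PAG special} is $\preceq$-non-decreasing, and by the scenario of Section~\ref{se: scenario} it is a conservative PAG.
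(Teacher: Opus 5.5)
Your route is the same as the paper's. Its proof of the corollary replaces \eqref{eq: bound on f hat} and \eqref{eq: bound on y hat} by their $\hat y$-versions and reruns the argument of Theorem~\ref{th: nonlinear PAG} with $\rho_T(\hat y)\preceq[b;\,2b]$. The gap is the step you take ``word for word'' from Lemma~\ref{le: bound on f hat}: the AC bound $\norm{\hat f\ac}_\infty\le 2M_f(\norm{\hat y\ac}_\infty^2+\norm{u\ac}_\infty^2)$. When you expand at $(\bar x,\bar u)=(\hat x\dc,u\dc)$, \eqref{eq: f quadratic bound special} controls only what remains after removing the first-order term $\pd[f]{y}(\hat y\dc,u\dc)\,\hat y\ac+\pd[f]{u}(\hat y\dc,u\dc)\,u\ac$. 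This term has zero $T$-average, so your DC (Jensen-gap) estimate is fine. It survives entirely in $\hat f\ac$, however, and its Jacobians vanish only at $(0,0)$, not at the DC operating point. For example, take $f=y^2$ ($M_f=1$), $u$ constant, and $\hat y=d+\alpha\sin(2\pi t/T)$ with $d,\alpha>0$. Then $\norm{\hat f\ac}_\infty=2d\alpha+\alpha^2/2$, which exceeds $2\alpha^2$ whenever $\alpha<4d/3$.

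This is not just slack in the argument; the corollary's $\eta\ac$ itself fails. Take $\dot x=-x+u+x^2$, $y=x$, i.e.\ $A=-1$, $B=F=C=1$, $f(y,u)=y^2$. It satisfies Assumptions~\ref{as: LISS}, \ref{as: contraction}, \ref{as: quadratic bounds special}, e.g.\ for $u\mmax=0.2$ with suitable $\calS$ and $\calX_0$, and has $\guy\ac(T)=\gfy\ac(T)=\tanh(T/4)$. With $u=0.1+0.01\sin(2\pi t/T)$ and $T$ large, the formula (first branch) gives $\eta\ac\approx0.0104$. The periodic solution, however, tracks the equilibrium $(1-\sqrt{1-4u(t)})/2$ quasi-statically and has $\norm{\hat y\ac}_\infty\approx0.013$.

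What is missing is a cross term in the AC inequality of order $M_f\,\gfy\ac(T)\,\norm{(\hat y\dc,u\dc)}\,\norm{(\hat y\ac,u\ac)}_\infty$. For $C^2$ $f$ on a convex domain, the Jacobian at $\bar z=(\bar y,\bar u)$ is bounded by $2M_f\norm{\bar z}$, which gives this order. The cross term couples the AC inequality to the DC component. So the scheme ``solve AC first, then substitute into $c\dc$'' and the closed form of $\eta\ac$ must be changed, e.g.\ by bounding $\norm{\hat y\dc}\le b$ inside the cross term. The paper's proof goes through Lemma~\ref{le: bound on f hat} and has the same defect, so following it cannot repair yours.

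Two smaller points. First, your claim that the equality case $4a\ac b^2+c\ac=2b$ still yields the smaller root $\xi_-$ is false when $2b$ is the larger root: the feasible set is then $[0,\xi_-]\cup\set{2b}$. Second, applying \eqref{eq: f quadratic bound special} at $\bar x=\hat x\dc$ tacitly requires $\hat x\dc\in\calS(u\mmax)$.
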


\begin{proof}
    Full-state estimation~\eqref{eq: bound on x hat} is not needed in this case. Instead of~\eqref{eq: bound on f hat} and~\eqref{eq: bound on y hat} we have
    \begin{subequations}
        \label{eq: special case estimations f hat y hat}
        \begin{align}
            \rho_T(\hat f) &\preceq M_f
            \begin{bmatrix}
                \norm{\rho_T(\hat y)}^2 + \norm{\rho_T(u)}^2 \\
                2\pb{\norm{\hat y\ac}_\infty^2 + \norm{u\ac}_\infty^2}
            \end{bmatrix}, \\
            \rho_T(\hat y) &\preceq \guy_T(\rho_T(u))
            + \gfy_T(\rho_T(\hat f)).
        \end{align}
    \end{subequations}
    Applying the same logic as in the proof of Theorem~\ref{th: nonlinear PAG}, inequalities~\eqref{eq: special case estimations f hat y hat} in view of the AG bound
    \begin{equation}
        \rho_T(\hat y) \preceq \begin{bmatrix}
            b \\ 2b
        \end{bmatrix}
    \end{equation}
    can be transformed into
    \begin{equation}
        \rho_T(\hat y) \preceq \begin{bmatrix}
            \eta\dc \\ \eta\ac
        \end{bmatrix}
    \end{equation}
    confirming PAG~\eqref{eq: nonlinear PAG special}.
\end{proof}

\section{Example}
\label{se: example}

\subsection{The Phase-Locked Loop System}

Consider a grid-following voltage source converter connected to the power grid. Synchronization of the converter with the grid is often achieved by a circuit called the \emph{phase-locked loop} (PLL). Its \emph{input} is the grid voltage sensed by the converter. The \emph{error dynamics} of a common PLL design (the synchronous reference frame PLL) are~\cite{escobarNonlinearStabilityAnalysis2021}
\begin{subequations}
    \label{eq: system example}
    \begin{align}
        \dot\theta &= k_p v_q(\theta, v) + \omega, \\
        \dot\omega &= k_i v_q(\theta, v)
    \end{align}
\end{subequations}
where
$
    v_q(\theta, v) = -\sin\theta + v \cdot \begin{bmatrix}
        \cos\theta \\ \sin\theta
    \end{bmatrix}
$,
$\theta$ and $\omega$ are the estimation errors of the grid's phase angle and frequency, $k_p$ and $k_i$ are parameters, and $v \in \bbR^2$ is the input (perturbation of the voltage measured by the converter).

The \emph{output} of the converter is the AC current injected into the grid. The current depends on $\theta$ but for the sake of this example we disregard the dependence and consider $\theta$ the output. Taking
$
    x = \begin{bmatrix}
        \theta & \omega
    \end{bmatrix}\trans
$,
$u = v$, and $y = \theta$ we obtain the PLL error dynamics as system~\eqref{eq: system special} with
\begin{subequations}
    \begin{gather}
        A = \begin{bmatrix}
            -k_p & 1 \\
            -k_i & 0
        \end{bmatrix}, \quad
        B = \begin{bmatrix}
            k_p & 0 \\
            k_i & 0
        \end{bmatrix}, \quad
        F = \begin{bmatrix}
            k_p \\ k_i
        \end{bmatrix}, \\
        C = \begin{bmatrix}
            1 & 0
        \end{bmatrix}\!,\:
        f(y,u) = y - \sin y + u \cdot \begin{bmatrix}
            \cos y - 1 \\
            \sin y
        \end{bmatrix}.
    \end{gather}
\end{subequations}

Parameters $k_p$ and $k_i$ are commonly presented in the form
\begin{equation}
    k_p = 2\zeta\omega_c, \quad k_i = \omega_c^2
\end{equation}
where $\omega_c$ (rad/s) is the \emph{bandwidth} of the PLL and $\zeta$ is the dimensionless \emph{damping factor}. We make the following tuning decisions:
\begin{itemize}
    \item $\zeta = 1/\sqrt{2}$~-- Wiener-optimal choice that balances tracking speed and filtering performance~\cite{chungPhaseTrackingSystem2000};
    \item $\omega_c = 2\pi 10\ \mathrm{rad/s}$~-- well below the main grid frequency (let us assume that the latter is 50~Hz); this is known as \emph{low-gain} tuning and has been recommended for distorted grid conditions \cite{freijedoTuningPhaselockedLoops2009}.
\end{itemize}

\subsection{Classical AG and Contraction}
\label{se: example contraction}

As explained in Section~\ref{se: scenario}, we first have to satisfy Assumptions~\ref{as: LISS} and~\ref{as: contraction}.

Assumption~\ref{as: LISS} can be validated by the method of \emph{two-dimensional comparison systems}~\cite{ponomarevNonlinearAnalysisSynchronous2024}. Given a uniform input bound $\norm{u}_\infty$, the method yields a forward-invariant set $\calS(\norm{u}_\infty)$ in the $x$-plane and an estimation of its domain of attraction, see~\cite[Fig.~4]{ponomarevNonlinearAnalysisSynchronous2024}. The asymptotic gain $\gamma(\norm{u}_\infty)$ can then be found via~\eqref{eq: scenario AG}. A powerful feature of the comparison method is that the border of $\calS(\norm{u}_\infty)$ is an actual trajectory of the system under some bang-bang input. Therefore, the AG found in this way is \emph{exact}.

Assumption~\ref{as: contraction} (contraction) can be checked by linearizing the system about a solution that lies in $\calS(\norm{u}_\infty)$. The linearization is generally time-varying but in our case has particular structure that enables a simple uniform exponential stability test~\cite[Lemma~2]{ponomarevNonlinearAnalysisSynchronous2024}.

The two assumptions limit the values of $\norm{u}_\infty$ that we can consider in the following.

\begin{figure}
    \centering
    \hspace{8mm}
    \textbf{Pure AC Input}
    \includegraphics[trim = 6.5cm 11cm 7cm 10.7cm, clip]{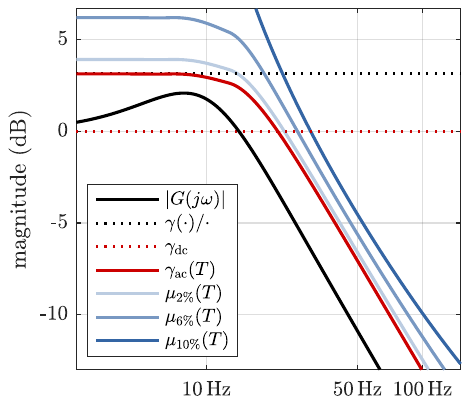}

    \hspace{8mm}
    \textbf{Input with AC and DC Components}
    \includegraphics[trim = 6.5cm 11cm 7cm 10.7cm, clip]{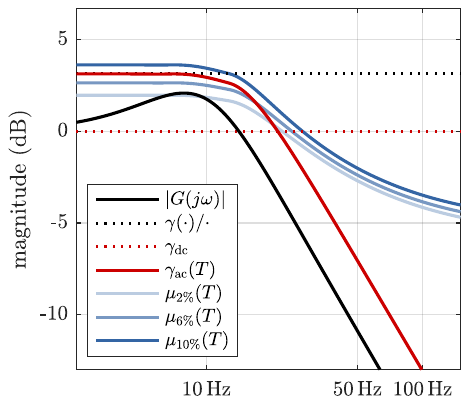}
    
    \hspace{8mm}
    \textbf{Pure DC Input}
    \includegraphics[trim = 6.5cm 10.5cm 7cm 10.7cm, clip]{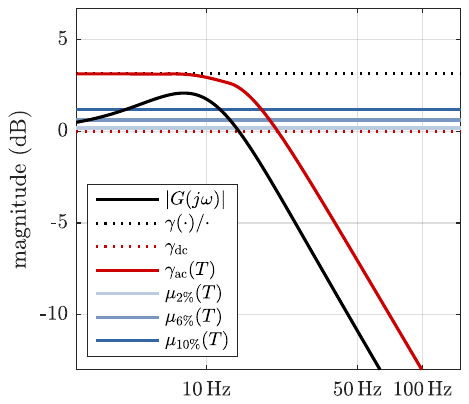}
    \caption{Input-output gains of the example system under different magnitudes and AC/DC compositions of the input: $\abs{G(j\omega)}$~-- frequency response of the linearized system; $\gamma(\cdot)/\cdot$~-- average derivative of the exact nonlinear AG $\gamma$; $\gamma\dc$ and $\gamma\ac(T)$~-- components of the exact PAG of the linearized system (Theorem~\ref{th: linear}); $\mu_\ell(T)$~-- quantities~\eqref{eq: example output measure}. The horizontal axis is both the $T$-axis for $\gamma\ac(T)$ and $\mu(T)$ and $\omega$-axis for $\abs{G(j\omega)}$ with $T = 2\pi/\omega$.}
    \label{fig: pll}
\end{figure}

\subsection{PAG Analysis}

Application of Theorem~\ref{th: linear} and Corollary~\ref{cor: special case} is straightforward and leads to the exact PAG of the linearized system, represented by its separate DC and AC components $\gamma\dc$ and $\gamma\ac(T)$, and a conservative nonlinear PAG $\gamma_T(\rho_T(u))$.

In order to facilitate the comparison of linear and nonlinear gains, we impose the following restrictions. \emph{Firstly}, we consider only three levels $\ell$ of the input magnitude:
\begin{equation}
    \norm{u}_\infty \leq \ell, \quad \ell \in \set{2\%, 6\%, 10\%}.
\end{equation}
Perturbation $u$ is measured in parts of the nominal grid voltage; $10\%$ is already a large magnitude that would rarely occur in practice. Following Section~\ref{se: example contraction}, at each level $\ell$ we find a forward-invariant set and establish contractivity inside it. At $\ell = 10\%$ the set is shaped like the set $\calE$ in~\cite[Fig.~4a]{ponomarevNonlinearAnalysisSynchronous2024} and has half-diameter of about $0.14$~rad in the $x_1$ direction~-- small enough to ensure contractivity by linearization.

\emph{Secondly}, we restrict the input's AC/DC composition to the following three options:
\begin{enumerate}
    \item Pure AC inputs: $u\dc = 0$ and $\norm{u\ac}_\infty \leq \ell$.
    \item Inputs with equally bounded AC/DC components: $\norm{u\dc} \leq \ell/2$ and $\norm{u\ac}_\infty \leq \ell/2$.
    \item Pure DC inputs: $\norm{u\dc} \leq \ell$ and $u\ac = 0$.
\end{enumerate}

\emph{Finally}, instead of the \emph{two-dimensional nonlinear} PAG $\gamma_T$ we plot the \emph{scalar} $\mu_\ell(T)$ which may be called the ``average slope of $\norm{\gamma_T}_1$ over $\ell$-bounded inputs\rlap{.}'' For the above input compositions, $\mu_\ell$ is defined as follows:
\begin{subequations}
    \label{eq: example output measure}
    \begin{enumerate}
        \item For pure AC inputs:
        \begin{equation}
            \mu_\ell(T) = \frac1\ell \normx{\gamma_T\px{\begin{bmatrix}
                0 \\ \ell
            \end{bmatrix}}}_1.
        \end{equation}
        \item For inputs with equally bounded AC/DC components:
        \begin{equation}
            \mu_\ell(T) = \frac1\ell \normx{\gamma_T\px{\begin{bmatrix}
                \ell/2 \\ \ell/2
            \end{bmatrix}}}_1.
        \end{equation}
        \item For pure DC inputs:
        \begin{equation}
            \mu_\ell(T) = \frac1\ell \normx{\gamma_T\px{\begin{bmatrix}
                \ell \\ 0
            \end{bmatrix}}}_1.
        \end{equation}
    \end{enumerate}
\end{subequations}

Figure~\ref{fig: pll} illustrates the numerical results:
\begin{itemize}
    \item frequency response of the linearized system;
    \item average derivative $\gamma(\cdot)/\cdot$ of the comparison-based exact AG $\gamma$ (although $\gamma$ is nonlinear, its derivative is almost constant for inputs bounded by $10\%$);
    \item exact PAG of the linearized system from Theorem~\ref{th: linear};
    \item ``average slopes''~\eqref{eq: example output measure} of the conservative PAG $\gamma_T$ from Corollary~\ref{cor: special case}.
\end{itemize}
The plots confirm and quantify the ``high-frequency attenuation'' behavior for AC-dominant periodic inputs high above the PLL bandwidth (10\,Hz)~-- e.g., harmonics of the main grid frequency (signals with periods 0.02\,s, 0.01\,s, etc.).

Voltage perturbation is often dominated by harmonics of the main grid frequency~\cite{erogluHarmonicProblemsRenewable2021}. Accordingly, let us next assume that the input $u$ is $T$-periodic with $T = 0.02$\,s and compare the PAG estimations to the actual oscillations that may occur in the system. Figure~\ref{fig: pll sim} presents the waveforms generated in response to randomized inputs (sums of harmonics with random amplitudes and phase shifts) in the several aforementioned cases regarding the input magnitude and AC/DC composition. Additionally, the bang-bang input~\eqref{eq: worst case input linear} has been applied which produced the triangular-looking waves (bang-bang is the worst-case input for the linearized system).

Comparing the randomized simulations, our PAG, and classical AG, we conclude that the PAG is indeed tighter than AG, particularly for pure AC inputs, and often appears to be close to the exact output bounds. Since our AG is exact, all improvement should be attributed to the fact that PAG makes use of the input periodicity.

\section{Conclusions}

We introduce the period-aware asymptotic gain (PAG)~-- an input-output gain for nonlinear systems with arbitrary bounded periodic input. Similarly to the classical asymptotic gain, it relates supremum norms of the input and output. However, it also takes into account the input periodicity, resembling a linear system's frequency response. The gain allows one to speak of bandwidth, low-pass behavior, etc. for nonlinear systems in a quantifiable manner. Future analysis will include inputs that are sums of signals with commensurable or incommensurable periods and almost periodic signals. Application of the PAG to the analysis of periodically forced networks is of interest as well.

\begin{figure}
    \centering
    \hspace{8mm}
    \textbf{Pure AC Input}
    \includegraphics[trim = 6.5cm 10.8cm 7cm 10.5cm, clip]{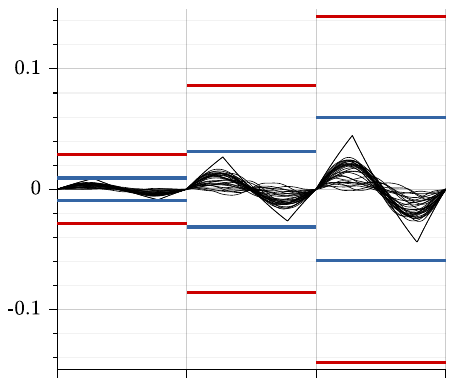}

    \hspace{8mm}
    \textbf{Input with AC and DC Components}
    \includegraphics[trim = 6.5cm 10.8cm 7cm 10.5cm, clip]{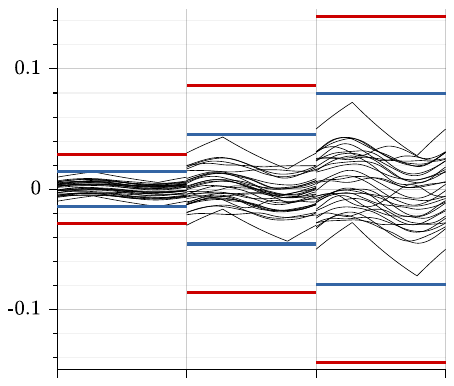}

    \hspace{8mm}
    \textbf{Pure DC Input}
    \includegraphics[trim = 6.5cm 10.8cm 7cm 10.5cm, clip]{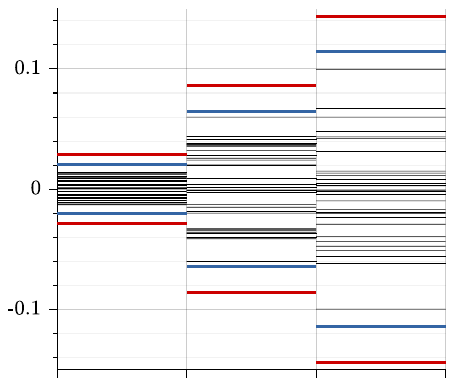}

    \vspace{-2mm}
    \hspace{6.6mm}
    $\underbrace{\hspace{21mm}}_{\norm{u}_\infty \leq 2\%}$
    $\underbrace{\hspace{21mm}}_{\norm{u}_\infty \leq 6\%}$
    $\underbrace{\hspace{21mm}}_{\norm{u}_\infty \leq 10\%}$
    \caption{Thin lines: output waveforms in response to randomized $T$-periodic inputs with $T = 0.02$\,s and magnitudes (left to right) $\norm{u}_\infty \leq 2\%$, $\norm{u}_\infty \leq 6\%$, and $\norm{u}_\infty \leq 10\%$. Thick lines: amplitude estimations by the exact classical AG (red) and conservative proposed PAG (blue).}
    \label{fig: pll sim}
\end{figure}

\bibliographystyle{IEEEtran}
\bibliography{paper}

\end{document}